\newdimen\rh@wd
\newdimen\rh@hta
\newdimen\rh@htb
\newbox\rh@box
\def\rh@measure#1{\setbox\rh@box=\hbox{$#1$}\rh@wd=\wd\rh@box \rh@hta=\ht\rh@box}
\def\widecheck#1{\rh@measure{#1}%
  \setbox\rh@box=\hbox{$\widehat{\vrule height \rh@hta width\z@ \kern\rh@wd}$}%
  \rh@htb=\ht\rh@box \advance\rh@htb\rh@hta \advance\rh@htb\p@
  \ooalign{$\vrule height \ht\rh@box width\z@ #1$\cr
           \raise\rh@htb\hbox{\scalebox{1}[-1]{\box\rh@box}}\cr}}
\newtcolorbox{markbox}{%
     enhanced, breakable, size=minimal, parbox=false, after={\par}, 
     before upper={\indent}, colback=white, 
     overlay = {\draw[line width=2pt] (frame.north east) -|
                       ([xshift=3mm]frame.east)|-(frame.south east);},
     overlay first={\draw[line width=2pt] (frame.north east) -|
                           ([xshift=3mm]frame.south east);},
     overlay middle={\draw[line width=2pt] ([xshift=3mm]frame.north east) -- 
                              ([xshift=3mm]frame.south east);},
     overlay last={\draw[line width=2pt] ([xshift=3mm]frame.north east)|-
                          (frame.south east);},
}
\numberwithin{equation}{section}  
\numberwithin{equation}{section}
        \newtheorem{theorem}{Theorem}[section]
        \newtheorem{proposition}[theorem]{Proposition}
        \newtheorem{lemma}[theorem]{Lemma}
        \newtheorem{remark}[theorem]{Remark}  
\let\oldmarginpar\marginpar
\renewcommand\marginpar[1]{\-\oldmarginpar[\raggedleft\footnotesize #1]
{\raggedright\footnotesize #1}}
\newcommand \bei {\begin{itemize}}
\newcommand \eei {\end{itemize}}
\newcommand \fh {\widehat{f}}
\newcommand \be {\begin{equation}}
\newcommand \bel {\be\label}
\newcommand \ee {\end{equation}}
\newcommand \la \langle
\newcommand \ra \rangle
\newcommand	\RR 		{\mathbb R}
\newcommand \del {{\partial}}
\newcommand \eps \epsilon
\newcommand \wh {\widehat{w}} 
\begin{document}

\title{Stability of a class of semilinear waves in $2+1$ dimension under null condition} 

\author{Shijie Dong\footnote{Sorbonne Universit\'e, CNRS, Laboratoire Jacques-Louis Lions, 4, Place Jussieu, 75252 Paris, France. 
Email: dongs@ljll.math.upmc.fr, shijiedong1991@hotmail.com.
\newline AMS classification: 35L05, 35L72, 74J30.
{\sl Keywords.} wave map in $\RR^{2+1}$, null condition, global-in-time solutions
}}

\date{\today}

\maketitle

\begin{abstract} 
We will show that in $\RR^{2+1}$ semilinear wave equations of the form $-\Box u = u Q(\del u; \del u)$ possess global-in-time solutions if the null condition on $Q(\del u; \del u)$ is assumed. As a consequence, we also provide a new proof, after \cite{Wong}, on the small data global solutions to the wave map equation in $\RR^{2+1}$ and no compactness assumptions on the initial data are needed.
\end{abstract}
\maketitle 

\tableofcontents


\

\section{Introduction}
\label{sec:1}

\paragraph{Brief history}

The study of nonlinear wave equations has been an active research field since decades ago, and tremendous results have been obtained in $\RR^{3+1}$.
It is well-known, for example see the examples by John \cite{John2, John}, that wave equations with quadratic nonlinearities might not have global-in-time solutions. Later on, the celebrated breakthrough by Klainerman \cite{Klainerman80, Klainerman852, Klainerman86} relying on the vector field method and Christodoulou \cite{Christodoulou} relying on the conformal method, showed that global-in-time solutions exist for wave equations with null nonlinearities. We also recall other work on the three dimensional wave equations \cite{L-R-cmp, L-R-annals} and \cite{P-S-cpam}, which generalise the notion of null forms. As an application, many physical models, like Dirac equations, Maxwell equations, Einstein equations, are proved to be stable under small perturbations.

Due to the fact that waves in $\RR^{2+1}$ do not decay fast enough, the classical null condition cannot guarantee that semilinear wave equations in $\RR^{2+1}$ with null quadratic nonlinearities have global-in-time solutions.
The existence results of global-in-time solutions to quadratic nonlinear wave equations in $\RR^{2+1}$ was first obtained by Godin \cite{Godin} in the semi linear case and by Alinhac \cite{Alinhac1, Alinhac2} in the quasilinear case, where the author showed that a class of quasilinear wave equations of the form $-\Box u + g^{\alpha \beta \gamma} \del_\gamma u \del_{\alpha \beta} u = 0$ satisfying the null condition are stable under the assumption that the initial data are small and compactly supported.  Later on Zha \cite{Zha} had a thorough study on a large class of wave equations in $\RR^{2+1}$. We also remind one the study of the system of coupled quasilinear wave and Klein-Gordon equations in $\RR^{2+1}$ by Ma \cite{YM1, YM2, YM0}, using the hyperboloidal foliation method \cite{LM0} which dates back to \cite{Klainerman85}. But the compactness assumption on the initial data is needed in all of the above results.

Without the compactness restriction on the initial data, Katayama \cite{Katayama} obtained the global solutions to a class of semilinear wave equations in $\RR^{2+1}$.
Later on, Cai, Lei, and Masmoudi \cite{Cai} removed the compactness assumption on the initial data after \cite{Alinhac1}, and obtained the global well-posedness result for the scalar wave equation which is fully nonlinear.

There is a large literature on the study of wave maps, we are not going to be exhaustive. We only mention the work \cite{Tao2001, Wong} on wave maps in $\RR^{2+1}$.

\paragraph{Model of interest and the main difficulties}

We will consider the following system of semilinear wave equations
\bel{eq:model}
\aligned
- \Box u_i &= R^{jkl}_i u_j Q_0 (u_k, u_l),
\\
u_i (t_0, \cdot) = &u_{i0},
\qquad
\del_t u_i (t_0, \cdot) = u_{i1},
\endaligned
\ee
in which $\Box = \del_\alpha \del^\alpha = -\del_t \del_t + \del_a \del^a$, and $Q_0 (v, w) := - \del_\alpha v \del^\alpha w~ or ~ \del_\alpha v \del_\beta w - \del_\alpha w \del_\beta v$ is the null form.
In the above, $t_0$ is the initial time taken to be 0, and $i, j, k, l \in \{ 1, 2, \cdots, n_0 \}$ with $n_0\geq 1$ the number of unknowns (equations). Greek indices $\alpha, \beta, \cdots$ run in $\{0, 1, 2\}$, Latin letters $a, b, \cdots$ run in $\{1, 2\}$, and the Einstein summation convention is adopted unless specified. Besides, we use $A \lesssim B$ to denote $A \leq C B$ with $C$ a generic constant.

Compared to the existing results in \cite{Alinhac1, Alinhac2, Cai, YM1, YM2}, the main difference is that the nonlinearities in the model problem \eqref{eq:model} include the potential $u_k$ (with no derivatives), instead of only $\del u_k$ and $\del \del u_j$. Recall the standard energy of wave equation is of the form
$$
\sum_{i, \alpha} \int_{\RR^2} | \del_\alpha u_i |^2 \, dx,
$$
which does not include the $L^2$ norm of the potential
$$
\sum_i \int_{\RR^2} | u_i |^2 \, dx.
$$
Thus it requires us to bound $\| u \|_{L^2(\RR^2)}$.

The Hardy inequality 
$$
\Big\| {w \over r} \Big\|_{L^2(\RR^n)}
\leq C \| \nabla w \|_{L^2(\RR^n)}
$$
is only true for $n \geq 3$, and the $L^2$ norm of the potential can be bounded by the conformal energy only when $n \geq 3$, see the remark in \cite{Wong}. The $L^2$ estimates (possibly with weight) for waves in $\RR^{2+1}$ were obtained in \cite{Wong, Liu, YM0}, but the compact support assumption on the initial data is required.

In order to conquer that difficulty and bound $\sum_i \int_{\RR^2} | u_i |^2 \, dx$, we write the wave equations in the Fourier space, and derive the formulation for $u_i$ by solving the ordinary differential equation (see for example \cite{Dong-zeromass}), then a careful treatment on each terms in the formulation gives us the desired result, see Lemma \ref{lem:linear}. However, there is a "loss of derivative" problem occurring when we estimate the highest order energy, see the proof of Proposition \ref{prop:E} for example. We find that this "loss of derivative" problem can be overcome by an observation on the estimates of the null forms and by the aid of the ghost weight energy estimates \cite{Alinhac1}.

\paragraph{Main theorem}

Our goal is to obtain global-in-time solutions to the system \eqref{eq:model}, which is stated now.

\begin{theorem}\label{thm:main}
Consider the system of coupled wave equations \eqref{eq:model}, and let $N \geq 1$ be a sufficiently large integer. The parameters $R^{j k l}_i$ are taken to be constants. Then there exists a small $\eps_0 > 0$, such that the Cauchy problem \eqref{eq:model} admits a global-in-time solution $(u_i)$ as long as the initial data $(u_{i0}, u_{i1})$ satisfy the smallness condition
$$
\sum_{|I| \leq N+1}\| \Lambda^I u_{i0} \|_{L^2(\RR^2)} + \sum_{|J|\leq N}\| \Lambda^J u_{i1} \|_{L^2(\RR^2) \cap L^1(\RR^2)}
< \eps, 
\qquad
\text{for any } \eps \in (0, \eps_0),
$$
with $\Lambda = \del_a, r\del_r, \Omega_{ab}$.
Moreover, the solution decays almost sharply with
\be
|u(t, x)|
\lesssim (1+t+|x|)^{-1/2} (1+|t-|x||)^{-1/2} \log(1+t). 
\ee
\end{theorem}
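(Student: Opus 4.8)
The plan is to run a bootstrap (continuity) argument on a suitably chosen energy functional combining the standard high-order energy, Alinhac's ghost-weight energy, and—crucially—the $L^2$ norm of the potential $u_i$ itself, which is not controlled by the wave energy in $\RR^{2+1}$. First I would set up the hierarchy of energies $E_N(t)$ (with the vector fields $\Lambda = \del_a, r\del_r, \Omega_{ab}$, but notably \emph{not} the Lorentz boosts, since the nonlinearity carries the undifferentiated factor $u_k$ and boosts would not commute cleanly), together with the ghost-weight contribution $\int e^{q} |\del u|^2$ where $q$ is a function of $r - t$, and separately track $\|\Lambda^I u\|_{L^2}$ for $|I| \leq N$. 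The bootstrap assumption would be $E_N(t)^{1/2} \leq C_1 \eps$ for $t \in [0, T^*)$, together with the pointwise decay $|u(t,x)| \lesssim \eps (1+t+|x|)^{-1/2}(1+|t-|x||)^{-1/2}\log(1+t)$ on a slightly lower order, obtained from the energy by Klainerman–Sobolev-type inequalities adapted to the $2+1$ rotation-and-scaling vector field set.

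The heart of the argument is to close the estimate for $\|\Lambda^I u\|_{L^2}$ using Lemma \ref{lem:linear}, the Fourier-space / Duhamel representation of the solution to the linear wave equation with source: one solves the ODE in Fourier space for each frequency, writes $\widehat{u}(t,\xi)$ in terms of the data and a time integral of $\widehat{F}(s,\xi)/|\xi|$ against $\sin$ and $\cos$ factors, and then the delicate point is that the $1/|\xi|$ singularity at low frequencies forces one to use the $L^1_x$ control on the data (hence the $L^1$ hypothesis on $u_{i1}$ in the theorem) and on the nonlinearity $F = R u_j Q_0(u_k,u_l)$. I would decompose the time integral into dyadic pieces and use the null structure of $Q_0$ together with the pointwise decay to show the source is integrable in time in the relevant norm; here the presence of $u_k$ with no derivative is actually helpful because it supplies the extra $t^{-1/2}$ decay that a pure $\del u \del u$ null form would lack in two space dimensions.

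Next I would propagate the high-order energy $E_N$ via the standard multiplier $\del_t u_i$ together with the ghost weight $e^q$, applied to the commuted equations $-\Box (\Lambda^I u_i) = \Lambda^I(R^{jkl}_i u_j Q_0(u_k,u_l)) + [\text{lower order commutator terms}]$. The energy inequality produces a spacetime error integral; the null forms let one trade a bad tangential derivative for a good one (the $(\del_t + \del_r)$ direction), picking up the ghost-weight gain $\int \frac{|\bar\del u|^2}{\langle r-t\rangle}$, and the factor $u_k$ is estimated in $L^\infty$ by the bootstrap decay. The term where all $N$ derivatives land on the null form and produce $\Lambda^N \del u \cdot \del u$ is where the ``loss of derivative'' appears; following the remark in the introduction, I would resolve it by the observation that in such a term one of the two $\del u$ factors can be kept at low order and estimated pointwise, and the ghost-weight energy absorbs the residual top-order factor—this is the step I expect to be the main obstacle, since it requires carefully matching the structure of $Q_0$ against which vector fields have been applied and verifying that no genuinely top-order uncontrolled quantity survives.

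Finally, with $E_N(t) \lesssim \eps^2 + \eps \cdot E_N(t)$-type inequalities established for the energy, the $L^2$-potential, and the ghost-weight pieces, a standard Grönwall / absorption argument (the nonlinear contributions carry an extra power of $\eps$ and a time-integrable or at worst logarithmically growing factor) improves the bootstrap constant, forcing $T^* = \infty$; the almost-sharp pointwise decay bound in the statement then follows from the global energy bound via the Klainerman–Sobolev inequality in $\RR^{2+1}$, with the $\log(1+t)$ loss coming precisely from the slow decay of free waves in two space dimensions and the accumulated logarithmic growth in the top-order estimate.
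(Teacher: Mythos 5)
Your overall architecture (bootstrap, ghost weight, Fourier-space $L^2$ lemma for the undifferentiated potential, null structure) matches the paper's, but there are two concrete gaps. First, you propose to run the commuted estimates with $\Lambda = \del_a, r\del_r, \Omega_{ab}$ only, excluding the Lorentz boosts on the grounds that they ``would not commute cleanly.'' This is backwards: $[\Box, L_a]=0$, the boosts act on $u_j Q_0(u_k,u_l)$ by Leibniz like any other admissible field, and the $\Lambda$'s in the theorem only measure the \emph{initial data}. The proof in the paper commutes with the full set $\Gamma \in \{\del_\alpha, \Omega_{ab}, L_a, L_0\}$, and the boosts are indispensable in two places you rely on: the Klainerman--Sobolev inequality with the interior gain $\langle t-r\rangle^{1/2}$ (Proposition \ref{prop:Sobolev}), and the null-form estimate $(1+t)|Q_0(v,w)| \lesssim \sum_{|I|=1}|\Gamma^I v|\,(|L_a w| + |\del w|)$ of Lemma \ref{lem:null}, which is the source of the $(1+t)^{-1}$ factor throughout the energy estimates. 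Without boosts you would need a substitute for both, and you do not supply one.

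Second, the step you yourself flag as ``the main obstacle'' --- the top-order term where all $N$ vector fields hit the null form, producing $|u|\,|L_a\Gamma^{I_1}u|\,|\Gamma u|\,|\del_t\Gamma^I u|$ with $|I_1|=N$ --- is exactly the loss-of-derivative issue, and saying the ghost weight ``absorbs the residual top-order factor'' is not yet an argument. The actual mechanism is the decomposition $L_a = (x_a\del_t + r\del_a) + (t-r)\del_a$: the first piece is the tangential combination controlled by the ghost-weight spacetime integral $\int\!\!\int r^{-2}\langle r-t\rangle^{-2}|(x_a\del_t+r\del_a)\Gamma^{I_1}u|^2$, after a spatial splitting at $|x|=(1+t)^{9/8}$ so that $r^{-1}$ can be traded for a power of $(1+t)^{-1}$; the second piece is handled by pairing the weight $\langle t-r\rangle$ with the pointwise decay of $|u||\Gamma u|$. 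Moreover, closing this requires allowing the top-order $L^2$ norm $\|\Gamma^I u\|_{L^2}$, $|I|=N$, to grow like $(1+t)^{1/4+2\delta}$ rather than logarithmically --- a hierarchy of growth rates your ``at worst logarithmically growing'' scheme does not accommodate. Relatedly, your remark that the undifferentiated factor $u_k$ is ``actually helpful'' is only half the story: it makes the nonlinearity effectively cubic for decay counting, but $|u|$ decays no faster than $|\del u|$ (it even carries an extra $\log$), and its $L^2$ norm is not controlled by the energy, which is precisely why Lemma \ref{lem:linear} and the separate $L^2$ bootstrap norms are needed.
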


In general the smallness condition on $\| \Lambda^J u_{i1} \|_{L^1(\RR^2)}$ is not assumed, but it will be used in the proof of Lemma \ref{lem:linear}.

\paragraph{Organisation}

In Section \ref{sec:pre}, we revisit some preliminaries of the wave equations and the vector field method. Next in Section \ref{sec:linear}, we prove the $L^2$ norm of the potential solving the linear wave equation. Then we give the proof to Theorem \ref{thm:main} in Section \ref{sec:proof}.


\section{Preliminaries}\label{sec:pre}

We work in the $(2+1)$ dimensional spacetime with signature $(-, +, +)$. A point in $\RR^{2+1}$ is denoted by $(x_0, x_1, x_2) = (t, x_1, x_2)$, and its spacial radius is denoted by $r = \sqrt{x_1^2 + x_2^2}$. We will use 
\be 
E(w, t)
:=
\int_{\RR^2} \Big(|\del_t w|^2 + \sum_a |\del_a w|^2 \Big) \, dx
\ee
to denote the energy of a sufficiently nice function $w = w(t, x)$ on the constant time slice.

The vector fields we will use in the following analysis include:
\bei
\item Translations: $\del_\alpha$, \quad $\alpha = 0, 1, 2$.

\item Rotations: $\Omega_{ab} = x_a \del_b - x_b \del_a$,  \quad $a, b = 1, 2$.

\item Lorentz boosts: $L_a = x_a \del_t + t \del_a$, \quad $a = 1, 2$.

\item Scaling vector field: $L_0 = t \del_t + r \del_r$.

\eei
We will use $\Gamma$ to denote the vector fields in 
$$
V := \{ \del_\alpha, \Omega_{ab}, L_a, L_0 \}.
$$

The following well-known results of commutators will be also frequently used.

\begin{lemma}
For any $\Gamma', \Gamma'' \in V$ we have
\be 
[\Box, \Gamma'] = C \Box,
\qquad
[\Gamma', \Gamma''] = \sum_{\Gamma \in V, C_\Gamma} C_\Gamma \Gamma,
\ee
with $C, C_\Gamma$ some constants.
\end{lemma}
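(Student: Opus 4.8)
The plan is to prove both identities by a direct, case-by-case computation over the finitely many types of fields in $V$.

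\emph{The bracket $[\Box, \Gamma']$.} Since $\Box$ has constant coefficients, $[\Box, \del_\alpha] = 0$, so $C = 0$ for translations. For the rotations and Lorentz boosts I would use that these are Killing vector fields for the Minkowski metric, so $\Box$ is invariant under their flows; concretely one checks $[\del_\gamma, \Omega_{ab}] = \delta_{\gamma a}\del_b - \delta_{\gamma b}\del_a$ and $[\del_\gamma, L_a] = \delta_{\gamma a}\del_t + \delta_{\gamma 0}\del_a$, and substitutes these into $[\del^\gamma\del_\gamma, \Gamma'] = \del^\gamma[\del_\gamma, \Gamma'] + [\del^\gamma, \Gamma']\del_\gamma$, which telescopes to $0$ in both cases. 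Finally, writing $L_0 = t\del_t + r\del_r = x^\alpha\del_\alpha$, one has $[\del_\gamma, L_0] = \del_\gamma$, hence $[\Box, L_0] = 2\Box$ and $C = 2$. So $C \in \{0, 2\}$ in all cases.

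\emph{The bracket $[\Gamma', \Gamma'']$.} The structural point is that each $\Gamma \in V$ is a first-order operator $\sum_\beta b^\beta \del_\beta$ whose coefficients $b^\beta$ are polynomials of degree at most one in $(t, x)$. Consequently, if either field is a translation then $[\del_\gamma, \Gamma'']$ merely differentiates the affine coefficients of $\Gamma''$ and yields a constant-coefficient combination of $\del_0, \del_1, \del_2$ (these brackets were already listed above). For the remaining pairs I would carry out the bracket directly and read off the identities $[L_a, L_b] = \Omega_{ab}$, $[\Omega_{ab}, L_c] = \delta_{bc}L_a - \delta_{ac}L_b$, the bracket $[\Omega_{ab}, \Omega_{cd}]$ being a constant-coefficient combination of rotations (which vanishes identically in spatial dimension two), together with $[\del_\alpha, L_0] = \del_\alpha$, $[\Omega_{ab}, L_0] = 0$, and $[L_a, L_0] = 0$. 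In every case the commutator lies in the span of $V$ with constant coefficients, which is exactly the assertion.

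I do not expect any real obstacle here: the statement is a standard finite verification, essentially the observation that $V$ spans a Lie algebra under which $\Box$ behaves covariantly. The only points requiring a little care are bookkeeping ones: fixing the metric signature and the normalization $L_0 = t\del_t + r\del_r$ so that the constant in $[\Box, L_0] = 2\Box$ comes out correctly, and making sure the enumeration of pairs $(\Gamma', \Gamma'')$ is genuinely exhaustive — translations against every type, then rotation/rotation, rotation/boost, boost/boost, and each field paired with $L_0$.
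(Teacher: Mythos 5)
Your proposal is correct, and the computations ($[\Box,L_0]=2\Box$ with $C=0$ for all other fields, $[L_a,L_b]=\Omega_{ab}$, $[\Omega_{ab},L_c]=\delta_{bc}L_a-\delta_{ac}L_b$, etc.) all check out with the paper's conventions $L_a=x_a\del_t+t\del_a$, $L_0=t\del_t+r\del_r$ and signature $(-,+,+)$. The paper states this lemma as well known and gives no proof, and your direct case-by-case verification is exactly the standard argument that would be supplied.
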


In order to estimate null forms and to overcome the problem of "loss of derivative", we need the following lemma which gives very detailed estimates on the null forms and can be found in \cite{Sogge} for example.

\begin{lemma}\label{lem:null}
It holds that
\be 
(1+t) |Q_0(v, w)|
\lesssim
\sum_{|I| = 1} | \Gamma^I v | \sum_{a, \alpha} \big( | L_a w | + | \del_\alpha w| \big).
\ee
Besides, if we act the vector field $\Gamma^I$ on the null form $Q_0 (v, w)$, a similar result holds
\be 
\aligned
(1+t) |\Gamma^I Q_0(v, w)|
&\lesssim
\sum_{|I_1| \leq |I|, a, \alpha} \big(| \Gamma^{I_1} v | + |L_a \Gamma^{I_1} v | + |\del_\alpha \Gamma^{I_1} v | \big)  \sum_{1\leq |I_2| \leq |I|/2 + 1} | \Gamma^{I_2} w |
\\
&+ \sum_{ |I_1| \leq |I|, a, \alpha} \big( | \Gamma^{I_1} w | + |L_a \Gamma^{I_1} w | + |\del_\alpha \Gamma^{I_1} w | \big)  \sum_{1\leq |I_2| \leq |I|/2 + 1} | \Gamma^{I_2} v |.
\endaligned
\ee
\end{lemma}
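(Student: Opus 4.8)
The plan is to prove both inequalities by a direct algebraic computation, using the cancellation built into the null forms together with the commutator relations of the previous lemma; nothing beyond pointwise bookkeeping is involved. Everything rests on the identity that splits a spatial derivative into a boost,
\be
t\,\del_a w \;=\; L_a w - x_a\,\del_t w , \qquad a = 1,2,
\ee
which trades a derivative for a boost at the price of the weight $t$.

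For the basic estimate I would expand $Q_0(v,w) = \del_t v\,\del_t w - \sum_a \del_a v\,\del_a w$, substitute the identity above into the last sum, and use $\sum_a x_a\del_a = r\del_r$ together with $L_0 = t\del_t + r\del_r$; this produces the exact identity
\be
t\,Q_0(v,w) \;=\; L_0 v\,\del_t w - \sum_a \del_a v\, L_a w .
\ee
Since $L_0 \in V$ and each $\del_a v$ is first order, $|L_0 v| + \sum_a |\del_a v| \lesssim \sum_{|I|=1}|\Gamma^I v|$, and $\del_t w = \del_0 w$; hence the right-hand side is $\lesssim \big(\sum_{|I|=1}|\Gamma^I v|\big)\big(\sum_{a,\alpha}(|L_a w| + |\del_\alpha w|)\big)$. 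Combined with the crude bound $|Q_0(v,w)| \lesssim \sum_\alpha|\del_\alpha v|\sum_\beta|\del_\beta w|$, which disposes of the summand $1$ in $1+t$, this gives the first inequality for $Q_0$. The other null form $Q_{\alpha\beta}(v,w) = \del_\alpha v\,\del_\beta w - \del_\alpha w\,\del_\beta v$ is handled the same way: the identity yields $t\,Q_{0a}(v,w) = \del_t v\, L_a w - \del_t w\, L_a v$ immediately, while for $Q_{12}$ one combines the angular splitting $r\,Q_{12}(v,w) = \del_r v\,\Omega_{12} w - \del_r w\,\Omega_{12} v$ with $\Omega_{12} = t^{-1}(x_1 L_2 - x_2 L_1)$ and $|x_a| \le r$ to cancel the weight $r$ and land again on a bound of the advertised shape.

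For the higher-order estimate I would first record that the fields in $V$ almost preserve null structure: for $\Gamma \in V$ one has $\Gamma\, Q_0(v,w) = Q_0(\Gamma v, w) + Q_0(v, \Gamma w) + c\, Q_0(v,w)$ for a constant $c$, and $\Gamma$ acting on $Q_{\alpha\beta}$ likewise produces a constant-coefficient combination of the basic null forms applied to $\Gamma v, w$, to $v, \Gamma w$, and to $v, w$ — this follows from the commutator lemma and a short computation using that the elements of $V$ are conformal Killing fields for the Minkowski metric. Iterating, $\Gamma^I Q(v,w)$ is a constant-coefficient combination of terms $Q^{(I_1,I_2)}(\Gamma^{I_1}v, \Gamma^{I_2}w)$ with $|I_1| + |I_2| \le |I|$ and each $Q^{(I_1,I_2)}$ a basic null form. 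Applying the basic estimate to each such term, and using the symmetry of the null forms in their two arguments to assume $|I_2| \le |I|/2$ so that the factor carrying the extra first-order vector field is the one built from $w$, one bounds $(1+t)\,|Q^{(I_1,I_2)}(\Gamma^{I_1}v, \Gamma^{I_2}w)|$ by $\big(\sum_{a,\alpha}(|L_a\Gamma^{I_1}v| + |\del_\alpha\Gamma^{I_1}v|)\big)\big(\sum_{|J|=1}|\Gamma^J\Gamma^{I_2}w|\big)$; the second factor is a sum of $|\Gamma^K w|$ with $|K| = |I_2| + 1 \in \{1,\dots,|I|/2+1\}$, the first is of the claimed form with $|I_1| \le |I|$, and summing over $(I_1,I_2)$ and adding the $v \leftrightarrow w$ contribution finishes it.

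The argument carries no serious obstacle — it is the standard "null forms differentiate nicely" computation — so the only points needing attention are the verification of the near-invariance of the null forms under $V$ (where the conformal Killing property is used) and the multi-index bookkeeping in the last step, which must be arranged so that the low-order factor always falls in the range $1 \le \,\cdot\, \le |I|/2+1$ while the high-order factor absorbs both the remaining vector fields and the good derivative coming from the null cancellation.
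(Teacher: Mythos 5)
Your argument is correct, and it is worth noting that the paper itself offers no proof of this lemma at all: it is imported wholesale from \cite{Sogge}, so there is nothing internal to compare against except the standard literature argument --- which is exactly what you have reconstructed. The core of your proof is sound: the exact identity $t\,Q_0(v,w)=L_0v\,\del_tw-\sum_a\del_av\,L_aw$ checks out (the $x_a\del_av\,\del_tw$ terms recombine into $r\del_rv\,\del_tw$ precisely as you say), the crude bound $|Q_0(v,w)|\lesssim\sum_\alpha|\del_\alpha v|\sum_\beta|\del_\beta w|$ correctly disposes of the ``$1$'' in the weight $1+t$, and the antisymmetric forms are handled by the analogous identities $t\,Q_{0a}(v,w)=\del_tv\,L_aw-\del_tw\,L_av$ and the angular decomposition of $Q_{12}$ (for small $t$, where $t^{-1}(x_1L_2-x_2L_1)$ degenerates, the crude bound again covers the whole weight). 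The higher-order statement then follows from the near-invariance $\Gamma\,Q(v,w)=Q'(\Gamma v,w)+Q''(v,\Gamma w)+c\,Q'''(v,w)$, and your bookkeeping is arranged correctly: symmetry lets you assume the low-order index satisfies $|I_2|\le|I|/2$, the generic extra vector field lands on the low-order factor (giving $1\le|K|\le|I|/2+1$), and the good derivatives $L_a,\del_\alpha$ land on the high-order factor with $|I_1|\le|I|$, which is precisely the shape of the stated right-hand side. The only places where you should supply detail rather than assertion are (i) the verification that $L_a$ and $\Omega_{12}$ map each $Q_{\alpha\beta}$ into a constant-coefficient combination of the basic null forms (a short computation with the commutators $[L_a,\del_t]=-\del_a$, $[L_a,\del_b]=-\delta_{ab}\del_t$, etc.), and (ii) the induction that propagates this to $\Gamma^I$; both are routine.
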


We next recall the Klainerman-Sobolev inequality (see \cite{Sogge} for example) in $\RR^{2+1}$.

\begin{proposition}\label{prop:Sobolev}
It holds that
\be 
\langle t+r \rangle^{1/2} \langle t-r \rangle^{1/2}  |u|
\lesssim
\sum_{|I| \leq 2} \| \Gamma^I u \|_{L^2(\RR^2)},
\ee
with $\langle a \rangle = \sqrt{1 + |a|^2}$.

\end{proposition}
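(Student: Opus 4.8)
This is a classical inequality; the standard proof, which I would follow, rescales the ordinary two--dimensional Sobolev embedding and splits the time slice into a piece near the light cone and a piece away from it. By the rotational invariance of the family $V$ it suffices to bound $|u(t,x)|$ at a point with $t\geq 0$ and $x=(r,0)$, $r=|x|$; when $1+t+r$ is bounded by a fixed constant the claim is just $H^2(\RR^2)\hookrightarrow L^\infty$, so we may assume $1+t+r$ large. Two ingredients are used repeatedly. (i) The scaled Sobolev inequality in $\RR^2$: on a disc $B_\rho$ of radius $\rho$ one has $\|w\|_{L^\infty(B_\rho)}^2\lesssim\sum_{j=0}^2\rho^{2j-2}\|\nabla^j w\|_{L^2(B_\rho)}^2$, and an analogous anisotropic version on a rectangle. (ii) The pointwise identity $\langle t-r\rangle\,|\del_\alpha w|\lesssim\sum_{|I|\leq 1}|\Gamma^I w|$, which follows from inverting the linear system expressing $(\del_t w,\del_r w)$ through $(L_0w,\,\omega^aL_a w)$ --- its determinant is $(t-r)(t+r)$ --- together with $|\del w|\leq\sum_{|I|\leq1}|\Gamma^I w|$ to absorb the ``$1+$'' in $\langle\cdot\rangle$, and from the commutator relations above to pass to $\del_1,\del_2$; iterating it with those same commutator relations gives $\langle t-r\rangle^{|\alpha|}|\del^\alpha w|\lesssim\sum_{|I|\leq|\alpha|}|\Gamma^I w|$, hence also $\langle t-r\rangle^{j}|\del^\alpha w|\lesssim\sum_{|I|\leq|\alpha|}|\Gamma^I w|$ for $0\leq j\leq|\alpha|$ since $\langle t-r\rangle\geq 1$.

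Away from the cone, i.e.\ when $|t-r|\geq\frac14(1+t+r)$, all of $\langle t+r\rangle$, $\langle t-r\rangle$ and $\ell:=1+t+r$ are comparable. Apply (i) on the disc $B_{\ell/100}(x)$; on this disc $\langle t-|y|\rangle\approx\ell$, so (ii) gives $\ell^{j}|\del^\alpha u|\lesssim\sum_{|I|\leq j}|\Gamma^I u|$ for $|\alpha|=j\leq 2$ throughout the disc, and therefore $|u(t,x)|\lesssim\ell^{-1}\sum_{|I|\leq2}\|\Gamma^I u\|_{L^2(\RR^2)}\approx\langle t+r\rangle^{-1/2}\langle t-r\rangle^{-1/2}\sum_{|I|\leq2}\|\Gamma^I u\|_{L^2(\RR^2)}$. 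This region contains in particular the deep interior $r\ll 1+t$ (where one may also invoke the boosts directly).

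Near the cone, i.e.\ when $|t-r|\leq\frac14(1+t+r)$, both $t$ and $r$ are large and comparable to $1+t+r$; set $\delta:=\frac14\langle t-r\rangle$, which here satisfies $\delta<r/2$. Pass to polar coordinates $y=(s\cos\theta,s\sin\theta)$ and put $\phi(s,\theta):=u(t,y)$, so that $\del_s\phi=\del_r u$ and $\del_\theta\phi=\Omega_{12}u$ identically. On the rectangle $Q=\{\,|s-r|\leq\delta\,\}\times\{\,|\theta|\leq c_0\,\}$, with $c_0$ a fixed small constant, apply the anisotropic Sobolev inequality of (i), rescaling $s$ by $\delta$ and $\theta$ by $c_0$; converting the $ds\,d\theta$ norms into $dx=s\,ds\,d\theta$ norms (with $s\approx r$ on $Q$) costs a factor $r^{-1}$, and one gets
$$
\delta\,r\,|u(t,x)|^2\ \lesssim\ \sum_{j_1+j_2\leq 2}\delta^{2j_1}\big\|\del_r^{\,j_1}\Omega_{12}^{\,j_2}u\big\|_{L^2(Q)}^2 .
$$
Since $\delta\approx\langle t-|y|\rangle$ on $Q$, applying the iterated form of (ii) to $\Omega_{12}^{j_2}u$ yields $\delta^{j_1}|\del_r^{j_1}\Omega_{12}^{j_2}u|\lesssim\sum_{|I|\leq j_1+j_2}|\Gamma^I u|$, so the right side is $\lesssim\sum_{|I|\leq2}\|\Gamma^I u\|_{L^2(\RR^2)}^2$; as $\delta\approx\langle t-r\rangle$ and $r\approx\langle t+r\rangle$ here, this is exactly the asserted bound.

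The substantive point is this near-cone case: one must choose the box anisotropically (radial width $\sim\langle t-r\rangle$, angular width $\sim1$), notice that $\Omega_{12}$ \emph{is} the angular derivative so that it absorbs the angular weight with no error term, and keep track of the Jacobian $s\,ds\,d\theta$, which is what supplies the remaining $\langle t+r\rangle^{-1/2}$ of decay. Everything else is the routine commutator bookkeeping behind (ii). For the precise statement and a full proof, see \cite{Sogge}.
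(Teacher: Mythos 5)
The paper does not prove this proposition at all---it is quoted as a known result with a citation to \cite{Sogge}---so there is nothing internal to compare against; your sketch is precisely the standard argument from that reference (dyadic split near/away from the light cone, scaled isotropic Sobolev off the cone, anisotropic boxes of radial width $\langle t-r\rangle$ and angular width $O(1)$ near it, plus the identity $(t^2-r^2)\del_t = tL_0 - x^aL_a$ and its relatives to trade weights for vector fields), and it is correct as outlined.
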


The following energy estimates of the ghost weight method by Alinhac \cite{Alinhac1} will play a vital role in compensating the "loss of derivative" issue.

\begin{proposition}
Let $w$ be the solution to 
$$
- \Box w = f,
$$
then it holds
\be 
\aligned
&E_{gst1} (w, t)
\leq
\int_{\RR^2} e^{q} \big( |\del_t w|^2 + \sum_a |\del_a w|^2  \big) \, dx (0)
+
2 \int_0^t \int_{\RR^2} f \del_t w e^q \, dxdt,
\endaligned
\ee
in which $q = \arctan (r-t)$, and 
\be 
E_{gst1} (w, t)
=
\int_{\RR^2} e^q \big( |\del_t w|^2 + \sum_a |\del_a w|^2  \big) \, dx (t)
+
\sum_{a} \int_0^t \int_{\RR^2} {e^q \over r^2 \langle r-t \rangle^2} \big| (x_a \del_t + r \del_a) w \big|^2 \, dxdt.
\ee
\end{proposition}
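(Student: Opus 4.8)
The plan is to test the equation $-\Box w = \partial_t^2 w - \Delta w = f$ against the \emph{ghost weight multiplier} $e^q\,\partial_t w$, with $q = \arctan(r-t)$, and rewrite the resulting identity as a spacetime divergence plus a nonnegative bulk term. First I would record the derivatives of the weight: since $r = \sqrt{x_1^2+x_2^2}$ and $\partial_a r = x_a/r$, we have
\be
\partial_t q = -\frac{1}{1+(r-t)^2} = -\langle r-t\rangle^{-2},
\qquad
\partial_a q = \frac{x_a}{r}\,\langle r-t\rangle^{-2}.
\ee
Multiplying $-\Box w = f$ by $e^q \partial_t w$ and using these, the time term becomes $e^q\partial_t w\,\partial_t^2 w = \tfrac12 \partial_t\big(e^q |\partial_t w|^2\big) + \tfrac12 e^q \langle r-t\rangle^{-2}|\partial_t w|^2$, while for the spatial term I would integrate by parts in $x_a$: $-e^q\partial_t w\,\partial_a^2 w = -\partial_a\big(e^q\partial_t w\,\partial_a w\big) + e^q(\partial_a q)\partial_t w\,\partial_a w + \tfrac12\partial_t\big(e^q|\partial_a w|^2\big) + \tfrac12 e^q\langle r-t\rangle^{-2}|\partial_a w|^2$. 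Note the sign of the $\langle r-t\rangle^{-2}$ terms is favorable precisely because $\partial_t q < 0$; this is the point of the choice $q = \arctan(r-t)$.

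Summing over $a$ and collecting, the identity reads
\be
\aligned
\tfrac12\partial_t\Big(e^q\big(|\partial_t w|^2 + \textstyle\sum_a|\partial_a w|^2\big)\Big)
&+ \tfrac12 e^q\langle r-t\rangle^{-2}\Big(|\partial_t w|^2 + \textstyle\sum_a|\partial_a w|^2 + 2\,\partial_t w\,\partial_r w\Big)
\\
&- \textstyle\sum_a \partial_a\big(e^q\partial_t w\,\partial_a w\big)
= e^q f\,\partial_t w,
\endaligned
\ee
where $\partial_r w := \tfrac{x_a}{r}\partial_a w$ comes from $\sum_a (\partial_a q)\partial_a w$. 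The key algebraic step is then the identity
$\sum_a \big|(x_a\partial_t + r\partial_a)w\big|^2 = r^2\big(|\partial_t w|^2 + \sum_a|\partial_a w|^2 + 2\partial_t w\,\partial_r w\big)$,
obtained by expanding the square and using $\sum_a x_a^2 = r^2$ and $\sum_a x_a\partial_a w = r\,\partial_r w$. This shows the middle term above equals $\tfrac12\,\dfrac{e^q}{r^2\langle r-t\rangle^2}\sum_a\big|(x_a\partial_t + r\partial_a)w\big|^2$, which is manifestly nonnegative (equivalently, it equals $\tfrac12 e^q\langle r-t\rangle^{-2}\big(|\partial_t w + \partial_r w|^2 + \sum_a|\partial_a w|^2 - |\partial_r w|^2\big) \ge 0$ since $|\partial_r w|\le|\nabla w|$).

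To finish, I would integrate the identity over $[0,t]\times\RR^2$; the spatial divergence term $\sum_a\partial_a(e^q\partial_t w\,\partial_a w)$ integrates to zero (for the class of solutions considered — sufficiently decaying, or compactly supported data propagating at unit speed — the boundary contribution at spatial infinity vanishes), and multiplying the result by $2$ yields exactly
\be
\int_{\RR^2} e^q\big(|\partial_t w|^2 + \textstyle\sum_a|\partial_a w|^2\big)dx(t)
+ \sum_a\int_0^t\!\!\int_{\RR^2}\frac{e^q}{r^2\langle r-t\rangle^2}\big|(x_a\partial_t + r\partial_a)w\big|^2\,dxdt
= \int_{\RR^2} e^q\big(\cdots\big)dx(0) + 2\int_0^t\!\!\int_{\RR^2} f\,\partial_t w\,e^q\,dxdt,
\ee
which is the claimed inequality (indeed an equality). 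The only genuinely delicate points are the bookkeeping that turns the cross term $e^q(\partial_a q)\partial_t w\,\partial_a w$ into the good ghost term via the square-completion identity, and checking that the weights $e^q\in[e^{-\pi/2},e^{\pi/2}]$ and the coefficient $r^{-2}|(x_a\partial_t+r\partial_a)w|^2 = |\partial_t w|^2 + 2\partial_t w\,\partial_r w + \sum_a|\partial_a w|^2$ carry no singularity at $r=0$, so all integrals are well defined; everything else is routine.
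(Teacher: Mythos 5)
Your proof is correct and complete: the weight derivatives, the square-completion identity $\sum_a |(x_a\del_t + r\del_a)w|^2 = r^2\big(|\del_t w|^2 + \sum_a|\del_a w|^2 + 2\del_t w\,\del_r w\big)$, and the sign analysis coming from $\del_t q = -\langle r-t\rangle^{-2}<0$ are all exactly the standard ghost-weight argument of Alinhac, which the paper itself does not reproduce but simply cites. Nothing further is needed.
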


Since $-\pi/2 \leq q \leq \pi/2$, we equivalently have
\bel{eq:ghost1} 
\aligned
E_{gst2} (w, t)
\lesssim
\int_{\RR^2} \big( |\del_t w|^2 + \sum_a |\del_a w|^2  \big) \, dx (0)
+
\int_0^t \int_{\RR^2} f \del_t w \, dxdt,
\endaligned
\ee
with
\be 
E_{gst2} (w, t)
=
\int_{\RR^2} \big( |\del_t w|^2 + \sum_a |\del_a w|^2  \big) \, dx (t)
+
\sum_{a} \int_0^t \int_{\RR^2} {1 \over r^2 \langle r-t \rangle^2} \big| (x_a \del_t + r \del_a) w \big|^2 \, dxdt.
\ee


\section{$L^2$ estimates on wave equation}\label{sec:linear}

We have the following lemmas which help bound the $L^2$ norm of the solution (with no derivatives in front) to wave equations.

\begin{lemma}\label{lem:linear}
Let $w$ be the solution to the linear wave equation
\bel{eq:w} 
\aligned
&- \Box w = f,
\\
w(1, \cdot) = &w_0,
\quad
\del_t w(1, \cdot) = w_1.
\endaligned
\ee
We assume that
\be 
\| w_0 \|_{L^2(\RR^2)} + \| w_1 \|_{L^2(\RR^2) \cap L^1(\RR^2)}
< +\infty,
\ee
as well as either
\be 
\| f(t, \cdot) \|_{L^2(\RR^2) \cap L^1(\RR^2)}
< C_f (1+t)^{-1 + \beta},
\qquad
\beta \in (-\infty, 1),
\ee
or
\be 
\int_0^t \| f(t', \cdot) \|_{L^2(\RR^2) \cap L^1(\RR^2)} \, dt'
\lesssim 
C_f (1+t)^\beta,
\qquad
\beta \in [0, 1).
\ee
Then the following $L^2$ norm bound is valid
\begin{eqnarray}\label{eq:l2bound}
\| u \|_{L^2(\RR^2)}
\lesssim
\left\{
\begin{array}{lll}
&
\| w_0 \|_{L^2(\RR^2)}
+
\log^{1/2} (2+t) \| w_1 \|_{L^2(\RR^2) \cap L^1(\RR^2)}
+
\log^{1/2} (2+t) C_f,
\quad
& \beta < 0,
\\
&
\| w_0 \|_{L^2(\RR^2)}
+
\log^{1/2} (2+t) \| w_1 \|_{L^2(\RR^2) \cap L^1(\RR^2)}
+
\log^{3/2} (2+t) C_f,
\quad
& \beta = 0,
\\
&
\| w_0 \|_{L^2(\RR^2)}
+
\log^{1/2} (2+t) \| w_1 \|_{L^2(\RR^2) \cap L^1(\RR^2)}
+
(1+t)^\beta \log^{1/2} (2+t) C_f,
\quad
& 0< \beta <1.
\end{array}
\right.
\end{eqnarray}
\end{lemma}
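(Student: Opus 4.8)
The plan is to solve the linear wave equation \eqref{eq:w} explicitly on the Fourier side and then extract the $L^2$ bound by carefully estimating each contribution. Writing $\widehat w(t,\xi)$ for the spatial Fourier transform, the equation $-\Box w = f$ becomes the ODE $\del_t^2 \widehat w + |\xi|^2 \widehat w = \widehat f$, whose Duhamel solution is
\[
\widehat w(t,\xi) = \cos\big((t-1)|\xi|\big)\,\widehat{w_0}(\xi) + \frac{\sin\big((t-1)|\xi|\big)}{|\xi|}\,\widehat{w_1}(\xi) + \int_1^t \frac{\sin\big((t-t')|\xi|\big)}{|\xi|}\,\widehat f(t',\xi)\,dt'.
\]
By Plancherel, $\|w(t,\cdot)\|_{L^2} = \|\widehat w(t,\cdot)\|_{L^2}$, so it suffices to control the $L^2_\xi$ norm of each of the three terms. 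The first term is immediate: $|\cos((t-1)|\xi|)| \le 1$ gives $\|\cos((t-1)|\xi|)\widehat{w_0}\|_{L^2} \le \|w_0\|_{L^2}$. The second and third terms are the delicate ones because of the factor $1/|\xi|$, which is singular at the origin in $\RR^2$; this is exactly where the $L^1$ hypothesis on $w_1$ and $f$ enters.

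For the $w_1$ term, I would split the $\xi$-integral into the low-frequency region $|\xi| \le 1/t$ (say) and the high-frequency region $|\xi| > 1/t$. On the high-frequency piece, $1/|\xi| \le t$ is too crude; instead use $|\sin((t-1)|\xi|)/|\xi|| \le \min(t-1, 1/|\xi|)$ and integrate the bound $1/|\xi|$ against $\|\widehat{w_1}\|_{L^\infty}^2 \le \|w_1\|_{L^1}^2$ over an annulus, which in $\RR^2$ produces $\int_{1/t}^{O(1)} \frac{1}{|\xi|^2}\,|\xi|\,d|\xi| \sim \log(2+t)$, hence a factor $\log^{1/2}(2+t)\|w_1\|_{L^1}$; away from the origin (bounded frequencies bounded away from $0$) one just uses Plancherel to get $\|w_1\|_{L^2}$. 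On the low-frequency piece $|\xi|\le 1/t$ use $|\sin((t-1)|\xi|)/|\xi|| \le t$ together with $\|\widehat{w_1}\|_{L^\infty} \le \|w_1\|_{L^1}$, and the area of the disc of radius $1/t$ is $\sim t^{-2}$, so this contributes $t^2 \cdot t^{-2}\|w_1\|_{L^1}^2$, i.e. a bounded multiple of $\|w_1\|_{L^1}^2$. Collecting, the $w_1$ contribution is $\lesssim \log^{1/2}(2+t)\,\|w_1\|_{L^2(\RR^2)\cap L^1(\RR^2)}$, uniformly in all three regimes of $\beta$.

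For the Duhamel term, I would interchange the norm and the time integral via Minkowski's inequality, $\big\|\int_1^t \frac{\sin((t-t')|\xi|)}{|\xi|}\widehat f(t',\cdot)\,dt'\big\|_{L^2_\xi} \le \int_1^t \big\|\frac{\sin((t-t')|\xi|)}{|\xi|}\widehat f(t',\cdot)\big\|_{L^2_\xi}\,dt'$, and then apply the same low/high frequency splitting to the inner norm, now with the length of the time interval $t-t'$ playing the role of $t-1$ above. This yields $\big\|\frac{\sin((t-t')|\xi|)}{|\xi|}\widehat f(t',\cdot)\big\|_{L^2_\xi} \lesssim \log^{1/2}(2+t-t')\,\|f(t',\cdot)\|_{L^2(\RR^2)\cap L^1(\RR^2)} \lesssim \log^{1/2}(2+t)\,\|f(t',\cdot)\|_{L^2\cap L^1}$. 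It then remains to insert the two alternative hypotheses on $f$ and integrate in $t'$: under the pointwise bound $\|f(t',\cdot)\|_{L^2\cap L^1} \le C_f(1+t')^{-1+\beta}$ one computes $\int_1^t (1+t')^{-1+\beta}\,dt'$, which is $\lesssim 1$ for $\beta<0$, $\lesssim \log(2+t)$ for $\beta=0$, and $\lesssim (1+t)^\beta$ for $0<\beta<1$; multiplying by the $\log^{1/2}(2+t)$ prefactor reproduces the three cases in \eqref{eq:l2bound}. Under the alternative integrated hypothesis $\int_0^t\|f(t',\cdot)\|_{L^2\cap L^1}\,dt' \lesssim C_f(1+t)^\beta$ with $\beta\in[0,1)$, one pulls the (slowly varying, monotone) factor $\log^{1/2}(2+t)$ out of the integral directly to get $\lesssim \log^{1/2}(2+t)(1+t)^\beta C_f$, which again matches. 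Adding the three contributions and using $\log^{3/2}$ to absorb $\log^{1/2}\cdot\log$ in the $\beta=0$ case gives the claimed inequality. (The statement writes $\|u\|_{L^2}$ for what is $\|w\|_{L^2}$; this is just a notational slip.)

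The main obstacle, and the only genuinely nonroutine point, is the treatment of the $1/|\xi|$ singularity at low frequencies in $\RR^2$: it is precisely the non-integrability of $|\xi|^{-2}$ near the origin in two dimensions that forces the $L^1$ assumption and is responsible for the logarithmic losses — there is no way to avoid them, which is why the linear estimate, and ultimately the main theorem, carries a $\log(1+t)$ factor rather than being sharp. Everything else is bookkeeping: choosing the frequency cutoff at scale $1/(t-t')$ in the Duhamel integral, applying Minkowski's inequality, and matching the three $\beta$-regimes against the elementary integral $\int_1^t (1+t')^{-1+\beta}\,dt'$.
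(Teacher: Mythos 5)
Your proposal is correct and follows essentially the same route as the paper: solve the equation on the Fourier side, bound the first term by Plancherel, and handle the $1/|\xi|$ singularity of the $\sin(t|\xi|)/|\xi|$ multiplier by a low/high frequency split using $\|\widehat{w_1}\|_{L^\infty}\le\|w_1\|_{L^1}$ near the origin (your cutoff at $|\xi|=1/t$ is the same computation as the paper's substitution $p=t|\xi|$ followed by splitting at $p=1$), then Minkowski plus the same estimate with $t-t'$ for the Duhamel term and the elementary integral $\int_0^t(1+t')^{-1+\beta}\,dt'$ in the three $\beta$-regimes. If anything, your write-up is slightly more complete than the paper's, since you also treat the alternative integrated hypothesis on $f$, which the paper's proof does not address explicitly.
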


\begin{proof}

$Step ~1.$ Expressing the solution in Fourier space.

We write the equation \eqref{eq:w} in the Fourier space to get
$$
\aligned
\del_{tt} \wh(t, \xi) + |\xi|^2 \wh(t, \xi) = \fh(t, \xi),
\\
\wh(1, \cdot) = \wh_0,
\qquad
\del_t \wh(1, \cdot) = \wh_1.
\endaligned
$$
We solve the ordinary differential equation in $t$ to arrive at the expression of the solution $w$ in Fourier space
$$
\wh(t, \xi)
=
\cos (t |\xi|) \wh_0
+
{\sin (t |\xi|) \over |\xi|} \wh_1
+
\int_0^t {\sin \big( (t-t') |\xi|\big) \over |\xi|} \fh(t') \, dt'.
$$
Thus the $L^2$ norm of $w$ can be bounded by the following three terms
\be 
\aligned
\|w\|_{L^2(\RR^2)}
&\lesssim
\|w_0\|_{L^2(\RR^2)}
+
\Big\|{\sin (t |\xi|) \over |\xi|} \wh_1 \Big\|_{L^2(\RR^2)}
+
\int_0^t \Big\|{\sin \big( (t-t') |\xi|\big) \over |\xi|} \fh(t') \Big\|_{L^2(\RR^2)} \, dt'
\\
& =: \|w_0\|_{L^2(\RR^2)} + A_1 + A_2.
\endaligned
\ee
The last two terms $A_1, A_2$ needs a more careful treatment.

$Step ~2.$ Estimating the term $A_1$.

We first bound the term $A_1^2$
$$
\aligned
A_1^2
&=
\int_{\{\xi : |\xi| \leq 1\}} {\sin^2 (t |\xi|) \over |\xi|^2} |\wh_1|^2 \, d\xi
+
\int_{\{\xi : |\xi| \geq 1\}} {\sin^2 (t |\xi|) \over |\xi|^2} |\wh_1|^2 \, d\xi
\\
&\leq 
|| \wh_1 ||^2_{L^\infty(\RR^2)} \int_{\{\xi : |\xi| \leq 1\}} {\sin^2 (t |\xi|) \over |\xi|^2} \, d\xi
+
\| \wh_1 \|_{L^2(\RR^2)}^2.
\endaligned
$$
Next we proceed by estimating 
$$
\aligned
\int_{\{\xi : |\xi| \leq 1\}} {\sin^2 (t |\xi|) \over |\xi|^2} \, d\xi
&=
\int_{S^1} dS^1 \int_0^{1} {\sin^2 (t |\xi|) \over |\xi|} \, d|\xi|
\\
&\lesssim
\int_0^{t} {\sin^2 p \over p} \, d p
\\
&\lesssim
\int_0^{1} 1 \, d p
+
\int_{1}^{t+2} {1 \over p} \, d p,
\endaligned
$$
where we used the simple fact that $\sin |p| \leq |p|$ and $\sin p \leq 1$.
By gathering the above results, we obtain
$$
A_1^2
\lesssim
\log(2+t) \| w_1 \|_{L^2(\RR^2) \cap L^1(\RR^2)}^2,
$$
which gives us
\be 
A_1
\lesssim
\log^{1/2}(2+t) \| w_1 \|_{L^2(\RR^2) \cap L^1(\RR^2)}.
\ee

$Step~3.$ Estimating the term $A_2$.

By the analysis in $Step~2$, we have
$$
\aligned
\Big\|{\sin \big( (t-t') |\xi|\big) \over |\xi|} \fh(t') \Big\|_{L^2(\RR^2)}
&\lesssim
\log^{1/2}(2 + t- t') \| f(t') \|_{L^2(\RR^2) \cap L^1(\RR^2)}
\\
&\leq 
C_f \log^{1/2}(2+t - t') \, (1+t')^{-1 + \beta},
\endaligned
$$
in which we used the decay assumption on $ \| f(t') \|_{L^2(\RR^2) \cap L^1(\RR^2)}$ in the last step.
Hence we have
$$
A_2
\lesssim
\log^{1/2} (2+t) \int_0^t (1+t')^{-1 + \beta} \, dt'.
$$
By calculating the integral in terms of the values of $\beta$, and gathering the results in the previous steps, we thus complete the proof.

\end{proof}

This lemma is of vital importance in the proof of Theorem \ref{thm:main}. Recall that the $L^2$ norm estimates on solutions to wave equations in $\RR^{2+1}$ have been obtained before \cite{Wong, Liu, YM0}, but the compactness assumption on the initial data is needed (although the compactness assumption can be removed in the theorem in \cite{Wong}). To the best of our knowledge, the estimates in Lemma \ref{lem:linear} is the first such result where no compactness assumptions are imposed on the initial data.

In Lemma \ref{lem:linear} there is a $\log$ or polynomial growth in the bounds of the $L^2$ norms. As far as we understand, such growth also exists in \cite{Wong, Liu, YM0}.

\begin{remark}
Recall that by using the conformal energy estimates in $\RR^{d+1}$ with $d \geq 3$, the energy 
$$
\sum_{|I| \leq 1} \| \Gamma^I w \|_{L^2(\RR^d)}
$$
can be bounded by the conformal energy.
However we note that in Lemma \ref{lem:null}, we only have the upper bound for 
$$
\| w \|_{L^2(\RR^2)},
$$
which means we "lose" one order of derivative, and that is what we interpret as the issue of "loss of derivative". 
\end{remark}


\section{Proof of the main theorem}\label{sec:proof}


Relying on a standard bootstrap argument, we are going to give the proof of Theorem \ref{thm:main}.

According to the smallness of the initial data, we can assume the following bounds hold in the time interval $[t_0, T)$ with $T> t_0$
\bel{eq:BA}
\aligned
\sum_i E_{gst2} ( \Gamma^I u_i, t)^{1/2}
&\leq C_1 \eps,
\qquad
|I| \leq N, 
\\
\sum_i \big\| \Gamma^I u_i \big\|_{L^2(\RR^2)}
&\leq C_1 \eps \log(1+t),
\qquad
|I| \leq N-1,
\\
\sum_i \big\| \Gamma^I u_i \big\|_{L^2(\RR^2)}
&\leq C_1 \eps (1+t)^{1/4 + 2\delta},
\qquad
|I| = N.
\endaligned
\ee
In the above, $\delta>0$ is some small constant, $C_1$ is some large constant (to ensure $T>t_0$) which is to be determined, and $T$ is defined by
\bel{eq:T-def} 
T := \{ t > t_0 : \eqref{eq:BA} ~holds\}.
\ee

Our goal is to show the following refined estimates
\bel{eq:BA-refined}
\aligned
\sum_i E_{gst2} ( \Gamma^I u_i, t)^{1/2}
&\leq {1\over 2} C_1 \eps,
\qquad
|I| \leq N, 
\\
\sum_i \big\| \Gamma^I u_i \big\|_{L^2(\RR^2)}
&\leq {1\over 2} C_1 \eps \log(1+t),
\qquad
|I| \leq N-1,
\\
\sum_i \big\| \Gamma^I u_i \big\|_{L^2(\RR^2)}
&\leq {1\over 2} C_1 \eps (1+t)^{1/4 + 2\delta},
\qquad
|I| = N.
\endaligned
\ee
The time bound $T$ has two kinds of possible values: $+\infty$ or some finite number. If $T = + \infty$, then Theorem \ref{thm:main} is proved. If $T < + \infty$ is some finite number and the refined estimates in \eqref{eq:BA-refined} are also established, then this contradicts the definition of $T$ in \eqref{eq:T-def}, which means $T$ must be $+\infty$. In either case, we are led to $T = +\infty$, which implies Theorem \ref{thm:main}.

A direct result of the Sobolev-Klainerman inequality in Proposition \ref{prop:Sobolev} gives us the following pointwise decay results.

\begin{lemma}\label{lem:sup}
Let the bootstrap assumptions in \eqref{eq:BA} be true, then we have
\be 
\aligned
\langle t+r \rangle^{1/2} \langle t-r \rangle^{1/2} \sum_i | \del \Gamma^J u_i |
&\lesssim C_1 \eps,
\qquad
|J| \leq N-3,
\\
\langle t+r \rangle^{1/2} \langle t-r \rangle^{1/2} \sum_i | \Gamma^J u_i |
&\lesssim C_1 \eps \log(1+t),
\qquad
|J| \leq N-3.
\endaligned
\ee
\end{lemma}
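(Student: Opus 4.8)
The plan is to read this off directly from the Klainerman--Sobolev inequality of Proposition \ref{prop:Sobolev}, the commutator lemma, and the definition of the ghost-weight energy $E_{gst2}$; no analytic input beyond the bootstrap assumptions \eqref{eq:BA} is needed, so this is essentially a bookkeeping argument.

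For the first inequality I would fix $i$ and a multi-index $J$ with $|J| \leq N-3$, and apply Proposition \ref{prop:Sobolev} to the function $\del \Gamma^J u_i$, obtaining
\[
\langle t+r \rangle^{1/2} \langle t-r \rangle^{1/2} |\del \Gamma^J u_i|
\lesssim
\sum_{|I| \leq 2} \| \Gamma^I \del \Gamma^J u_i \|_{L^2(\RR^2)}.
\]
Then, using that each $\Gamma \in V$ either commutes with $\del_\alpha$ or produces a constant-coefficient combination of the $\del_\beta$'s (the first lemma of Section \ref{sec:pre}), I would rewrite $\Gamma^I \del \Gamma^J u_i$ as a finite linear combination, with absolute constants, of terms $\del \Gamma^K u_i$ with $|K| \leq |I| + |J| \leq N-1$. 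Since $E_{gst2}(\Gamma^K u_i, t)$ dominates $\|\del_t \Gamma^K u_i\|_{L^2(\RR^2)}^2 + \sum_a \|\del_a \Gamma^K u_i\|_{L^2(\RR^2)}^2$, the first line of \eqref{eq:BA} (valid for $|K| \leq N$, in particular for $|K| \leq N-1$) gives $\|\del \Gamma^K u_i\|_{L^2(\RR^2)} \lesssim E_{gst2}(\Gamma^K u_i, t)^{1/2} \lesssim C_1 \eps$. Summing over $i$ and over the finitely many multi-indices involved yields the claimed bound.

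For the second inequality I would instead apply Proposition \ref{prop:Sobolev} to $\Gamma^J u_i$ itself, $|J| \leq N-3$, so that
\[
\langle t+r \rangle^{1/2} \langle t-r \rangle^{1/2} |\Gamma^J u_i|
\lesssim
\sum_{|I| \leq 2} \| \Gamma^I \Gamma^J u_i \|_{L^2(\RR^2)}
\lesssim
\sum_{|K| \leq N-1} \| \Gamma^K u_i \|_{L^2(\RR^2)},
\]
where in the last step I used the commutator lemma to expand each product $\Gamma^I \Gamma^J$ into a constant-coefficient combination of $\Gamma^K$ with $|K| \leq |I| + |J| \leq N-1$. The second line of \eqref{eq:BA} then bounds the right-hand side by $C_1 \eps \log(1+t)$; summing over $i$ finishes the argument.

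The only point that needs any care — and what I would regard as the ``main obstacle'', though it is really just an index count — is making sure that after commuting all vector fields to the outside the total order $|K|$ never exceeds $N-1$, so that the relevant bootstrap hypotheses actually apply (the weighted $L^2$ bound in \eqref{eq:BA} giving the $\log$ growth is assumed only up to order $N-1$, the order-$N$ bound being weaker). This works out precisely because $|J|$ is restricted to $|J| \leq N-3$ and the Klainerman--Sobolev inequality in $\RR^2$ costs only two extra vector fields.
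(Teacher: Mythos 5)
Your proposal is correct and is precisely the argument the paper intends: the paper states Lemma \ref{lem:sup} as a direct consequence of the Klainerman--Sobolev inequality of Proposition \ref{prop:Sobolev} combined with the bootstrap bounds \eqref{eq:BA}, without writing out the details. Your index bookkeeping ($|J|\leq N-3$ plus two vector fields from Klainerman--Sobolev, landing at order $\leq N-1$ where both the energy bound and the logarithmically growing $L^2$ bound apply) is exactly the point that makes the lemma work.
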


\begin{proposition}[Improved estimates on the energy norm $E$]\label{prop:E}
Under the bootstrap assumptions in \eqref{eq:BA}, we have
\be
\sum_i E_{gst2}( \Gamma^I u_i, t)^{1/2}
\lesssim \eps + (C_1 \eps)^2,
\qquad
|I| \leq N.
\ee

\end{proposition}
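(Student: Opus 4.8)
The strategy is the standard ghost-weight energy estimate from \eqref{eq:ghost1} applied to $w = \Gamma^I u_i$ for each $|I| \le N$, combined with the sharp null-form bounds of Lemma \ref{lem:null} and the pointwise decay coming from Lemma \ref{lem:sup}. Since the $\Gamma$'s commute with $\Box$ up to constants (using the commutator lemma), we have $-\Box \Gamma^I u_i = \Gamma^I\big(R^{jkl}_i u_j Q_0(u_k,u_l)\big) + (\text{l.o.t.})$, and the main task is to estimate
$$
\int_0^t \int_{\RR^2} \Gamma^I\big(R^{jkl}_i u_j Q_0(u_k,u_l)\big)\, \del_t \Gamma^I u_i \, dxdt'.
$$
Distributing $\Gamma^I$ by the Leibniz rule produces terms of the shape $\Gamma^{I_1} u_j \, \Gamma^{I_2} Q_0(u_k,u_l)$ with $|I_1| + |I_2| \le |I|$, and the key structural observation is that the null form always supplies the factor $(1+t)^{-1}$ from Lemma \ref{lem:null}.

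\emph{Main steps, in order.} First I would set up the energy inequality: apply \eqref{eq:ghost1} to each $\Gamma^I u_i$, sum over $i$ and over $|I| \le N$, so that $\sum_i E_{gst2}(\Gamma^I u_i,t)$ is controlled by the initial data (of size $\lesssim \eps^2$ by the hypothesis of Theorem \ref{thm:main}) plus the spacetime nonlinear integral. Second, split the nonlinear integral according to which factor carries the most derivatives. In the ``low--high'' regime one factor ($u_j$ or one of the inputs to $Q_0$) has $\le N/2 + 1$ derivatives and is hence pointwise bounded via Lemma \ref{lem:sup} by $C_1\eps \log(1+t)\,\langle t+r\rangle^{-1/2}\langle t-r\rangle^{-1/2}$ (or the $\del$-version). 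Third, invoke Lemma \ref{lem:null}: the $(1+t)$ in front of $|\Gamma^{I_2} Q_0|$ converts the high-order factor into something like $|\Gamma^{I_1'} v|$ with the crucial point that on the left one has a true $L_a$ or $\del_\alpha$ derivative — so after multiplying by $\del_t\Gamma^I u_i$ and integrating, one is left with an integrand bounded by
$$
\frac{C_1\eps\log(1+t')}{(1+t')\langle t'+r\rangle^{1/2}\langle t'-r\rangle^{1/2}}\ \big|(\text{good-derivative term})\big|\ \big|\del_t\Gamma^I u_i\big|.
$$
The ghost-weight spacetime term $\int\!\!\int \frac{1}{r^2\langle r-t\rangle^2}|(x_a\del_t+r\del_a)w|^2$ is exactly what absorbs the ``good derivative'' $L_a w$ after a Cauchy--Schwarz split (writing $L_a = (x_a\del_t + r\del_a) - (r-x_a)(\cdots)$ type manipulations), while the remaining $\del_t\Gamma^I u_i$ factor is absorbed into $E_{gst2}(\Gamma^I u_i,t')^{1/2}$. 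One then runs a Gr\"onwall / time-integration estimate using $\int_0^t (1+t')^{-1}\log(1+t')\,dt' \lesssim \log^2(1+t)$ and the bootstrap bounds \eqref{eq:BA}, checking that every resulting power of $t$ closes with room to spare because of the $(1+t)^{-1}$ gain.

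\emph{The delicate point} — the ``loss of derivative'' flagged in the introduction — is the term where $\Gamma^{I_2}$ lands on $Q_0$ with $|I_2|$ nearly equal to $N$ while $\Gamma^{I_1}$ lands on the potential $u_j$ with no derivative, i.e. the term $u_j \,\Gamma^I Q_0(u_k,u_l)$. Here $u_j$ cannot be estimated via the standard energy (that is precisely why Lemma \ref{lem:linear} was proved), so one must use the $L^2$ bound $\|\Gamma^{I_1}u_j\|_{L^2} \lesssim C_1\eps\log(1+t)$ from \eqref{eq:BA}, place $\Gamma^I Q_0$ in $L^2$ as well — but now $\Gamma^I Q_0$ genuinely involves $N+1$ derivatives of one factor, which is the ``loss.'' The resolution, as the excerpt hints, is Lemma \ref{lem:null}'s second inequality: the factor $(1+t)$ in front of $|\Gamma^I Q_0|$ lets one trade the top-order $L^2$ norm against $E_{gst2}(\Gamma^{I_2}u,t)^{1/2}$ (order $\le N$ only) paired with a \emph{lower}-order pointwise factor $\sum_{1\le|I_2'|\le N/2+1}|\Gamma^{I_2'}w|$ that is controlled by Lemma \ref{lem:sup}, plus the ghost-weight good-derivative term for the $L_a$ contribution; the decay $(1+t')^{-1}$ and the factor $(1+t'+r)^{-1/2}(1+t'-r)^{-1/2}$ together beat the $(1+t')^{1/4+2\delta}$ growth allowed for the top energy, so the time integral is summable. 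I would organize this as a case analysis on $(|I_1|,|I_2|)$ — the ``low--high,'' ``high--low,'' and ``balanced'' cases — verifying in each that the net outcome is $\lesssim \eps^2 + (C_1\eps)^3$ after integration, hence $\le$ a constant times $(C_1\eps)^2$, giving the claimed bound; the main obstacle throughout is bookkeeping the exact weights so that the ghost-weight term is always available to soak up the one ``good'' derivative and nothing is left requiring more than $N$ plain derivatives.
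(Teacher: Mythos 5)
Your plan follows essentially the same route as the paper: ghost-weight energy estimate, Leibniz expansion, the $(1+t)^{-1}$ gain from Lemma \ref{lem:null}, isolation of the top-order term $u_j\,\Gamma^I Q_0(u_k,u_l)$, decomposition of $L_a$ into the good derivative $x_a\del_t+r\del_a$ plus a $(t-r)\del_a$ remainder, and absorption of the good derivative by the ghost-weight spacetime integral. However, there is one step you assert will work that does not go through as written. The ghost-weight spacetime term controls $\big\|\tfrac{1}{r\langle r-t\rangle}(x_a\del_t+r\del_a)\Gamma^{I_1}u\big\|_{L^2_{t,x}}$, with an $r^{-1}$ weight, whereas the null-form lemma only hands you a $(1+t)^{-1}$; since $(1+t)^{-1}\lesssim r^{-1}$ fails for $r\gg t$ (and one must also produce the extra $\langle r-t\rangle^{-1}$), the Cauchy--Schwarz pairing you describe cannot be performed on all of $\RR^2$. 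The paper resolves this by splitting space at $|x|=(1+t)^{9/8}$: in the exterior region $\langle r-t\rangle\gtrsim(1+t)^{1/8}$ gives enough extra decay through Klainerman--Sobolev that no null structure is needed at all, while in the interior one may replace $(1+t)^{-9/8}$ by $r^{-1}$ at the cost of a factor $(1+t)^{1/8}$, and the missing $\langle r-t\rangle^{-1}$ is borrowed from the pointwise decay of the low-order factors $|u|\,|\Gamma u|$. Some such device (or an equivalent weighted Hardy-type substitute) is a genuinely necessary ingredient, not mere bookkeeping, and your plan does not supply it.

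Two smaller points. First, your power counting is off: the right-hand side of the energy identity is a product of four solution factors (the cubic nonlinearity times $\del_t\Gamma^I u_i$), so the spacetime integral contributes $(C_1\eps)^4$ to the \emph{squared} energy, whence $(C_1\eps)^2$ after taking square roots; your claimed $(C_1\eps)^3$ for the squared energy would only yield $(C_1\eps)^{3/2}$, which does not match the stated proposition. Second, no Gr\"onwall argument is needed (or used in the paper): every time integral is either absolutely convergent, thanks to the integrable decay rates $(1+t')^{-5/4}$ and $(1+t')^{-9/8}$, or closed directly by Cauchy--Schwarz in $t'$ against the ghost-weight integral.
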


\begin{proof}

Recall the system of equations \eqref{eq:model}, and we act the vector filed $\Gamma^I$ with $I \leq N-1$ to get
$$
\aligned
- \Box \Gamma^I u_i 
= \sum_{|I_1| + |I_2| \leq N-1} R^{jkl}_i \Gamma^{I_1} u_j \Gamma^{I_2} Q_0 (u_k, u_l).
\endaligned
$$
By using the ghost weight energy estimate \eqref{eq:ghost1}, we have
$$
\aligned
\sum_i E_{gst2}( \Gamma^I u_i, t)
&\lesssim
\int_{\RR^2} \big( |\del_t \Gamma^I u|^2 + \sum_a |\del_a \Gamma^I u|^2  \big) \, dx (0)
\\
&+
\sum_{|I_1| + |I_2| \leq N-1} \int_0^t \int_{\RR^2} \Big|  R^{jkl}_i \Gamma^{I_1} u_j \Gamma^{I_2} Q_0 (u_k, u_l) \del_t \Gamma^I u_i \Big| \, dxdt'.
\endaligned
$$
We proceed by estimating the last term
$$
\aligned
&\sum_{|I_1| + |I_2| \leq N-1} \int_0^t \int_{\RR^2} \Big|  R^{jkl}_i \Gamma^{I_1} u_j \Gamma^{I_2} Q_0 (u_k, u_l) \del_t \Gamma^I u_i \Big| \, dxdt'
\\
\lesssim
& \sum_{|I_1| + |I_2| + |I_3| \leq N-1} \int_0^t \int_{\RR^2} {1\over 1+t'} |\Gamma^{I_1} u| | \Gamma^{I_2} u| |\Gamma^{I_3} u| | \del_t \Gamma^I u| \, dxdt'
\\
\lesssim 
&\sum_{|I_1| \leq N, |I_2| \leq N-3} \int_0^t {1\over 1+t'} \big\| \Gamma^{I_1} u \big\|_{L^2(\RR^2)}^2 \big\| \Gamma^{I_2} u \big\|_{L^\infty(\RR^2)}^2 \, dt'
\\
\lesssim
& (C_1 \eps)^4 \int_0^t (1+t')^{-5/4} \, dt'
\lesssim
(C_1 \eps)^4.
\endaligned
$$
Thus we easily get 
\be 
\sum_i E_{gst2}( \Gamma^I u_i, t)^{1/2}
\lesssim \eps + (C_1 \eps)^2,
\qquad
|I| \leq N-1.
\ee

Next, we look at the case of $|I| = N$, and we have
$$
\aligned
\sum_i E_{gst2}( \Gamma^I u_i, t)
&\lesssim
\int_{\RR^2} \big( |\del_t \Gamma^I u|^2 + \sum_a |\del_a \Gamma^I u|^2  \big) \, dx (0)
\\
&+
 \sum_{|I_1| + |I_2| \leq N}  \int_0^t \int_{\RR^2} \Big| R^{jkl}_i \Gamma^{I_1} u_j \Gamma^{I_2} Q_0 (u_k, u_l) \del_t \Gamma^I u_i \Big| \, dxdt'.
\endaligned
$$
We first divide the last estimate into two parts
$$
\aligned
&\sum_{|I_1| + |I_2| \leq N} \int_0^t \int_{\RR^2} \Big|  R^{jkl}_i \Gamma^{I_1} u_j \Gamma^{I_2} Q_0 (u_k, u_l) \del_t \Gamma^I u_i \Big| \, dxdt'
\\
\leq
&\sum_{|I_1| + |I_2| \leq N, |I_2| \leq N-1} \int_0^t \int_{\RR^2} \Big|  R^{jkl}_i \Gamma^{I_1} u_j \Gamma^{I_2} Q_0 (u_k, u_l) \del_t \Gamma^I u_i \Big| \, dxdt'
\\
+
&\sum_{ |I_2| = N} \int_0^t \int_{\RR^2} \Big|  R^{jkl}_i u_j \Gamma^{I_2} Q_0 (u_k, u_l) \del_t \Gamma^I u_i \Big| \, dxdt'
:= P_1 + P_2.
\endaligned
$$
We notice that the estimate of $P_1$ part follows from what we have done for the case of $|I| \leq N-1$ with
$$
P_1 \lesssim (C_1 \eps)^4
$$ 
so we only need to estimate $J_2$. 
We split the space domain into two parts for each fixed $t$
$$
S_{int} := \{ x: |x| \leq (1+t)^{9/8} \},
\qquad
S_{ext} := \{ x: |x| \geq (1+t)^{9/8} \},
$$
and the term $P_2$ can be divided into two parts
$$
\aligned
P_2
&= 
P_2(S_{int}) + P_2(S_{ext}),
\\
P_2(S)
&:=
\sum_{ |I_2| = N} \int_0^t \int_{S} \Big|  R^{jkl}_i u_j \Gamma^{I_2} Q_0 (u_k, u_l) \del_t \Gamma^I u_i \Big| \, dxdt'.
\endaligned
$$
In the region $S_{ext}$, it holds $(1+t)^{1/8} \lesssim \langle r-t \rangle$, and we do not even need the null structure to have the bound
$$
\aligned
P_2(S_{ext})
&\lesssim
\sum_{|I_1| \leq N, |I_2| \leq N-3}  \int_0^t \big\| \Gamma^{I_1} u \big\|_{L^2(\RR^2)}^2 \big\| \Gamma^{I_2} u \big\|_{L^\infty(S_{ext})}^2 \, dt'
\\
&\lesssim 
(C_1 \eps)^4 \int_0^t (1+t')^{-9/8} \, dt'
\lesssim (C_1 \eps)^4.
\endaligned
$$

Then we only need to estimate the term $P_2(S_{int})$.
By recalling the estimates for null forms in Lemma \ref{lem:null}, it holds
$$
\aligned
P_2(S_{int})
&\lesssim
\sum_{|I_1| = N, a} \int_0^t \int_{S_{int}} {1\over 1+t'} |u| \big| L_a \Gamma^{I_1} u \big| \big| \Gamma u \big| \big| \del_t \Gamma^I u \big| \, dxdt'
\\
&+ \sum_{|I_1| = N, \alpha}  \int_0^t \int_{S_{int}} {1\over 1+t'} |u|  \big| \del_\alpha \Gamma^{I_1} u \big| \big| \Gamma u \big| \big| \del_t \Gamma^I u \big| \, dxdt'
\\
&+ \sum_{\substack{|I_1| \leq N, |I_2| \leq N\\ |I_1| + |I_2| \leq N+2}} \int_0^t \int_{S_{int}} {1\over 1+t'} |u| \big| \Gamma^{I_1} u \big| \big| \Gamma^{I_2} u \big| \big| \del_t \Gamma^I u \big| \, dxdt'
:= P_{21} + P_{22} + P_{23}.
\endaligned
$$
It is not hard to see that the terms $P_{22}, P_{23}$ can be estimated in the same fashion as the term $P_1$ with
$$
P_{22} + P_{23} \lesssim (C_1 \eps)^4,
$$
so we only focus on the estimate of $P_{21}$.
We decompose the Lorentz boosts $L_a$ as
$$
L_a = x_a \del_t + t \del_a
= \big( x_a \del_t + r \del_a \big) + (t-r) \del_a.
$$
Hence we have
$$
\aligned
P_{21}
&\lesssim
\sum_{|I_1| = N, a} \int_0^t \int_{S_{int}} {1\over 1+t'} |u| \big| ( x_a \del_t + r \del_a) \Gamma^{I_1} u \big| \big| \Gamma u \big| \big| \del_t \Gamma^I u \big| \, dxdt'
\\
&+
\sum_{|I_1| = N, a} \int_0^t \int_{S_{int}} {1\over 1+t'} |u| \big| (t'-r) \del_a \Gamma^{I_1} u \big| \big| \Gamma u \big| \big| \del_t \Gamma^I u \big| \, dxdt'
\\
&\lesssim
\sum_{|I_1| = N, a} \int_0^t \int_{S_{int}} {\big| ( x_a \del_t + r \del_a) \Gamma^{I_1} u \big| \over (1+t') \langle r-t' \rangle } \langle r-t \rangle |u|  \big| \Gamma u \big| \big| \del_t \Gamma^I u \big| \, dxdt'
\\
&+
\sum_{|I_1| = N, a} \int_0^t \int_{S_{int}} \langle r-t' \rangle {|u| \big| \Gamma u \big|  \over 1+t'}  \big| \del_a \Gamma^{I_1} u \big| \big| \del_t \Gamma^I u \big| \, dxdt'
\\
&\lesssim
\sum_{|I_1| = N, a} \int_0^t  \Big\| {\big| ( x_a \del_t + r \del_a) \Gamma^{I_1} u \big| \over (1+t')^{9/8} \langle r-t' \rangle } \Big\|_{L^2(S_{int})} \big\| \del_t \Gamma^I u  \big\|_{L^2(\RR^2)}  (1+t')^{1/8} \big\|  \langle r-t' \rangle |u|  \big| \Gamma u \big|  \big\|_{L^\infty(\RR^2)} \, dt'
\\
&+ \sum_{|I_1| = N, a} \int_0^t \big\| \del_a \Gamma^{I_1} u  \big\|_{L^2(\RR^2)}^2 \Big\| \langle r-t' \rangle {|u| \big| \Gamma u \big|  \over 1+t'} \Big\|_{L^\infty(\RR^2)} \, dt' =: P_{211} + P_{212}.
\endaligned
$$
Successively, we deduce
$$
P_{212}
\lesssim
\sum_{|I_1| = N} \int_0^t (C_1 \eps)^4 (1+t')^{-3/2} \, dt'
\lesssim
(C_1 \eps)^4.
$$
In order to estimate the term $P_{211}$, we note that in the region $S_{int}$, it holds $1/(1+t)^{9/8} \leq 1/ |x|$, and we thus have
$$
\aligned
P_{211}
&\lesssim
\sum_{|I_1| \leq N, a} \int_0^t \Big\| {\big| ( x_a \del_t + r \del_a) \Gamma^{I_1} u \big| \over r \langle r-t' \rangle } \Big\|_{L^2(\RR^2)} (C_1 \eps)^3 (1+t')^{-5/8} \, dt'
\\
&\lesssim
\sum_{|I_1| \leq N, a} (C_1 \eps)^3 \Big( \int_0^t \Big\| {\big| ( x_a \del_t + r \del_a) \Gamma^{I_1} u \big| \over r \langle r-t' \rangle } \Big\|_{L^2(\RR^2)}^2 \, dt' \Big)^{1/2} \Big( \int_0^t (1+t')^{-5/4} \, dt'  \Big)^{1/2}
\\
&\lesssim
(C_1 \eps)^4.
\endaligned
$$

By gathering the estimates above, the proof is complete.

\end{proof}

\begin{proposition}[Improved estimates on the $L^2$ norm]\label{prop:L2}
Under the bootstrap assumptions in \eqref{eq:BA}, the following estimates hold
\bel{eq:L2Improved} 
\aligned
\sum_i \big\| \Gamma^I u_i \big\|_{L^2(\RR^2)}
&\leq \eps \log(1+t) + (C_1 \eps)^2 \log(1+t),
\qquad
|I| \leq N-1,
\\
\sum_i \big\| \Gamma^I u_i \big\|_{L^2(\RR^2)}
&\leq \eps (1+t)^{1/4 + 2\delta} + (C_1 \eps)^2 (1+t)^{1/4 + 2\delta},
\qquad
|I| = N.
\endaligned
\ee
\end{proposition}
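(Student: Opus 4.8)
The plan is to apply Lemma \ref{lem:linear} to each $w=\Gamma^I u_i$ with $|I|\le N$. Commuting $\Gamma^I$ through \eqref{eq:model} (using the commutator lemma) gives $-\Box\Gamma^I u_i=f_i$ with $f_i=\sum_{|I_1|+|I_2|\le|I|}R^{jkl}_i\,\Gamma^{I_1}u_j\,\Gamma^{I_2}Q_0(u_k,u_l)$. The initial-data quantities $\|\Gamma^I u_i(0)\|_{L^2(\RR^2)}$ and $\|\del_t\Gamma^I u_i(0)\|_{L^2(\RR^2)\cap L^1(\RR^2)}$ are $\lesssim\eps$ by the smallness hypothesis of Theorem \ref{thm:main}, after trading time derivatives for spatial ones via the equation (this is exactly why $N+1$ spatial derivatives of $u_{i0}$ and $N$ of $u_{i1}$ are assumed). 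Everything then reduces to a bound $\|f_i(t,\cdot)\|_{L^2(\RR^2)\cap L^1(\RR^2)}\lesssim(C_1\eps)^3(1+t)^{-1+\beta}$ (or the time-integrated version $\lesssim(C_1\eps)^3(1+t)^\beta$), with $\beta$ strictly below $0$ for the $|I|\le N-1$ estimate and below $1/4+2\delta$ up to logarithms for $|I|=N$; feeding this into the appropriate branch of Lemma \ref{lem:linear} produces the claimed bounds, using $(C_1\eps)^3\le(C_1\eps)^2$ for $\eps$ small to convert the cubic source into the quadratic remainder.

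For $|I|\le N-1$ there is no derivative loss: Lemma \ref{lem:null} rewrites each $\Gamma^{I_2}Q_0(u_k,u_l)$ with the gain $(1+t)^{-1}$ and with all vector fields staying within order $N$, which the bootstrap \eqref{eq:BA} controls in $L^2$. One places the single highest-order factor (order $\le N$, hence $L^2$ norm $\lesssim C_1\eps(1+t)^{1/4+2\delta}$ at worst, or $\lesssim C_1\eps$ if it carries a bare $\del$) in $L^2(\RR^2)$, and the other two factors — which then have order $\le N/2+1\le N-3$ — in $L^\infty(\RR^2)$ via Lemma \ref{lem:sup}, giving pointwise decay $\lesssim C_1\eps(1+t)^{-1/2}\log(1+t)$. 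This yields $\|f_i(t,\cdot)\|_{L^2(\RR^2)}\lesssim(C_1\eps)^3(1+t)^{-7/4+2\delta}\log^2(1+t)$; for the $L^1$ norm one puts two factors in $L^2$ and one in $L^\infty$, losing a power $(1+t)^{1/2}$, hence still $\lesssim(C_1\eps)^3(1+t)^{-5/4+2\delta}\log^2(1+t)$. Both decay faster than $(1+t)^{-1}$, so the $\beta<0$ branch of Lemma \ref{lem:linear} gives $\sum_i\|\Gamma^I u_i\|_{L^2(\RR^2)}\lesssim\eps\log(1+t)+(C_1\eps)^2\log(1+t)$.

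The case $|I|=N$ is the heart of the matter and exhibits the "loss of derivative": in the term with $|I_1|=0,\ |I_2|=N$, Lemma \ref{lem:null} produces $L_a\Gamma^{I_1}u_k$ with $|I_1|=N$, i.e. $N+1$ vector fields, not controlled by any norm in \eqref{eq:BA}. Following the structure of Proposition \ref{prop:E} I would: (i) split space into $S_{int}=\{|x|\le C(1+t)\}$ and $S_{ext}$, discarding the null structure on $S_{ext}$ and using $\langle r-t\rangle\gtrsim 1+t$ directly there; (ii) on $S_{int}$ decompose $L_a=(x_a\del_t+r\del_a)+(t-r)\del_a$, the part $(t-r)\del_a\Gamma^N u_k$ having $\del_a\Gamma^N u_k$ controlled in $L^2(\RR^2)$ by $E_{gst2}(\Gamma^N u_k,t)^{1/2}\lesssim C_1\eps$, with the weight $|t-r|$ absorbed by the $\langle t-r\rangle^{-1/2}$ decay of the accompanying low-order factors; (iii) for the good part $(x_a\del_t+r\del_a)\Gamma^N u_k$, invoke the ghost-weight energy $\bigl\| (x_a\del_t+r\del_a)\Gamma^N u_k/(r\langle r-t\rangle)\bigr\|_{L^2_tL^2_x}\lesssim C_1\eps$ and pair the factor $r\langle r-t\rangle$ against the remaining $u_j\,\Gamma^{I_2'}u_l/(1+t)$, which by Lemma \ref{lem:sup} satisfies the pointwise bound $\lesssim(C_1\eps\log(1+t))^2/(1+t)$ (using $r\le\langle t+r\rangle$ and $\langle r-t\rangle=\langle t-r\rangle$); integrating in $t$ and applying Cauchy--Schwarz so the spacetime ghost bound enters, this contributes only $\lesssim(C_1\eps)^3\log^{5/2}(1+t)$ to $\int_0^t\|f_i\|_{L^2(\RR^2)}\,dt'$. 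The lower-order pieces $|I_2|\le N-1$ are treated as in the previous paragraph, and the $0<\beta<1$ branch of Lemma \ref{lem:linear} then gives $\sum_i\|\Gamma^I u_i\|_{L^2(\RR^2)}\lesssim\eps(1+t)^{1/4+2\delta}+(C_1\eps)^2(1+t)^{1/4+2\delta}$.

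The step I expect to be the main obstacle is the $L^1(\RR^2)$ bound on this top-order contribution. For the $L^2$ bound the multiplier accompanying the ghost-weighted quantity decays like $(1+t)^{-1}$, so the time Cauchy--Schwarz costs only a logarithm; an $L^1$ bound, however, requires two factors of $f_i$ in $L^2(\RR^2)$, and the only honest pairing (the ghost-weighted $\bdel\Gamma^N u_k$ together with $u_j$ or $\Gamma^{I_2'}u_l$, neither of which decays in $L^2$) leaves an $L^\infty$-multiplier that is merely bounded, not decaying, so the naive time Cauchy--Schwarz costs $(1+t)^{1/2}$ — too much against the target $(1+t)^{1/4+2\delta}$. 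Closing this gap is the real content: it forces one to use the $\langle r-t\rangle^2$ weight in $E_{gst2}$ as sharply as possible, to tune the threshold defining $S_{int}$, and, should that not suffice, to invoke the identity $Q_0(v,w)=\tfrac12\Box(vw)-\tfrac12 v\Box w-\tfrac12 w\Box v$ so as to absorb the top-order derivative into the Duhamel operator, ensuring that $\bdel\Gamma^N u_k$ never appears under an $L^1$ norm at all. The remaining manipulations are routine bookkeeping.
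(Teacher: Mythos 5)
Your reduction to Lemma \ref{lem:linear}, your treatment of $|I|\le N-1$, and your identification of the top-order derivative loss together with the ghost-weight remedy all agree with the paper. But the proof does not close as written, and you say so yourself: the $L^1(\RR^2)$ bound on the top-order source $u_j\,\Gamma^{I}Q_0(u_k,u_l)$, $|I|=N$, is left open, and this is not routine bookkeeping --- it is precisely the estimate that forces the exponent $1/4+2\delta$ in the bootstrap. Moreover your first suggested repair (tuning the threshold of $S_{int}=\{|x|\le(1+t)^{1+\theta}\}$) cannot work: on such an $S_{int}$ the $L^\infty$ multiplier $r\langle r-t\rangle\,|u_j|/(1+t)$ accompanying the ghost-weighted quantity is of size $r^{1/2}\langle r-t\rangle^{1/2}(1+t)^{-1}\lesssim(1+t)^{\theta}$ up to logarithms, which never decays for $\theta\ge 0$, so the time Cauchy--Schwarz always costs at least $(1+t)^{1/2}$; while for $\theta<0$ the exterior region contains points near the light cone and the crude exterior bound fails. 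The identity $Q_0=\tfrac12\Box(vw)-\cdots$ is not what the paper uses either.

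The paper's missing idea is to split by distance to the light cone rather than by $|x|$: $S_1=\{\langle t-r\rangle\le(1+t)^{1/2}\}$ and $S_2=\{\langle t-r\rangle\ge(1+t)^{1/2}\}$. On $S_2$ the null structure is discarded entirely: Lemma \ref{lem:sup} gives $\|u\|_{L^\infty(S_2)}\lesssim C_1\eps\,(1+t)^{-3/4}\log(1+t)$, and putting the two derivative factors in $L^2$ via the energy yields $\int_0^t(1+t')^{-3/4}\log(1+t')\,dt'\lesssim(1+t)^{1/4}\log(1+t)$. On $S_1$ one has $r\simeq 1+t$ and, crucially, $\langle r-t\rangle\le(1+t)^{1/2}$, so the good-derivative pairing produces the multiplier $\|\langle r-t'\rangle u\|_{L^\infty(S_1)}\lesssim C_1\eps\,(1+t')^{1/4}(1+t')^{-1/2}\log(1+t')$, i.e. a genuine decay $(1+t')^{-1/4+\delta}$; Cauchy--Schwarz in time against the ghost-weight spacetime norm then costs only $\bigl(\int_0^t(1+t')^{-1/2+2\delta}dt'\bigr)^{1/2}\lesssim(1+t)^{1/4+\delta}$, and the $(t-r)\del_a$ remainder is harmless for the same reason $\langle r-t\rangle\le(1+t)^{1/2}$. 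Altogether $\int_0^t\|f_i\|_{L^1\cap L^2}\,dt'\lesssim(C_1\eps)^3(1+t)^{1/4+\delta}$, and the $\beta\in(0,1)$ branch of Lemma \ref{lem:linear} delivers exactly the claimed $(1+t)^{1/4+2\delta}$. This light-cone--adapted splitting, balancing the loss of null structure far from the cone against the size of the $\langle r-t\rangle$ weight near it, is what your proposal lacks.
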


\begin{proof}
According to Lemma \ref{lem:linear}, in order to show the first improved estimate in \eqref{eq:L2Improved} it suffices to show for each $i$ that
$$
\Big\| \Gamma^{I} \Big( R^{jkl}_i u_j Q_0 (u_k, u_l) \Big) \Big\|_{L^2 (\RR^2) \cap L^1 (\RR^2)}
\lesssim (C_1 \eps)^2 (1+t)^{-9/8},
$$
for all $|I| \leq N-1$.
It is easy to see that
$$
\aligned
\Big\| \Gamma^I \big( R^{jkl}_i u_j Q_0 (u_k, u_l) \big) \Big\|_{L^2(\RR^2) \cap L^1(\RR^2)}
&\lesssim
{1\over 1 + t} \sum_{i, |I_1| \leq N } \big\| \Gamma^{I_1} u_i \big\|_{L^2(\RR^2)}^2  \sum_{|I_2| \leq N - 3}  \| \Gamma^{I_2} u_i \|_{L^\infty(\RR^2)}
\\
&\lesssim (C_1 \eps)^2 t^{-5/4 + 4\delta}
\leq (C_1 \eps)^2 (1+t)^{-9/8},
\endaligned
$$
where we have used Lemma \ref{lem:null}.

Next we consider the case of $|I| = N$, and the proof in Proposition \ref{prop:E} indicates that it is easy to have
$$
\sum_{|I_1| + |I_2| \leq N, |I_2| \leq N-1} \Big\|  \Big( R^{jkl}_i \Gamma^{I_1} u_j \Gamma^{I_2} Q_0 (u_k, u_l) \Big) \Big\|_{L^2 (\RR^2) \cap L^1 (\RR^2)}
\lesssim (C_1 \eps)^3 (1+t)^{-5/4},
$$
which is integrable.
Then in order to estimate the rest term
$$
\int_0^t \Big\|  \Big( R^{jkl}_i u_j \Gamma^{I} Q_0 (u_k, u_l) \Big) \Big\|_{L^2 (\RR^2) \cap L^1 (\RR^2)} \, dt',
$$
we split the space region into two parts
$$
S_1 := \{ x: \langle t - r \rangle \leq (1+t)^{1/2} \},
\qquad
S_2 := \{ x: \langle t - r \rangle \geq (1+t)^{1/2} \}.
$$
We easily have in the region $S_2$ that
$$
\aligned
&\int_0^t \Big\|  R^{jkl}_i u_j \Gamma^{I} Q_0 (u_k, u_l) \Big\|_{L^2 (S_2) \cap L^1 (S_2)} \, dt'
\\
\lesssim
&\sum_{|I_1| \leq N, \alpha} \int_0^t \| \del_\alpha \Gamma^{I_1} u \|^2_{L^2(\RR^2)} \| u \|_{L^\infty(S_2)} \, dt'
\\
\lesssim
&
(C_1 \eps)^3 \int_0^t (1+t')^{-3/4} \log(1+t') \, dt'
\lesssim (C_1 \eps)^3 (1+t)^{1/4} \log(1+t).
\endaligned
$$
We next estimate
$$
\aligned
&\int_0^t \Big\| R^{jkl}_i u_j \Gamma^{I} Q_0 (u_k, u_l) \Big\|_{L^2 (S_1) \cap L^1 (S_1)} \, dt'
\\
\lesssim
&\sum_{|I_1| \leq N} \int_0^t {|u| \over 1+t'} \big\| \Gamma^{I_1} u \big\|_{L^2(\RR^2)}^2 \, dt'
+
\sum_{|I_1| = N, a} \int_0^t \Big\| {|u| \over 1+t'} \big(|L_a \Gamma^{I_1} u|  \big) \big| \Gamma u \big|  \Big\|_{L^1 (S_1)} \, dt'
\\
 &+
\sum_{|I_1| = N, \alpha} \int_0^t \Big\| {|u| \over 1+t'} \big( |\del_\alpha \Gamma^{I_1} u| \big) \big| \Gamma u \big| \Big\|_{L^1 (S_1)} \, dt'
\\
&\lesssim
(C_1 \eps)^3
+
\sum_{|I_1| = N, a} \int_0^t \Big\| {|u| \over 1+t'} \big(|(t'\del_a + r \del_t) \Gamma^{I_1} u|  \big) \big| \Gamma u \big|  \Big\|_{L^1 (S_1)} \, dt'
\\
&+
\sum_{|I_1| = N, \alpha} \int_0^t \Big\| {|u| \over 1+t'} \big( \langle r-t' \rangle |\del_\alpha \Gamma^{I_1} u| \big) \big| \Gamma u \big| \Big\|_{L^1 (S_1)} \, dt'.
\endaligned
$$
Since it holds $\langle r-t \rangle \leq (1+t)^{1/2}$ in the region $S_1$, we obtain
$$
\sum_{|I_1| = N, \alpha} \int_0^t \Big\| {|u| \over 1+t'} \langle r-t' \rangle |\del_\alpha \Gamma^{I_1} u| \big| \Gamma u \big| \Big\|_{L^1 (S_1)} \, dt'
\lesssim
(C_1 \eps)^3. 
$$
We are left with the estimate
$$
\aligned
&\sum_{|I_1| = N, a} \int_0^t \Big\| {|u| \over 1+t'} \big(|(t'\del_a + r \del_t) \Gamma^{I_1} u|  \big) \big| \Gamma u \big|  \Big\|_{L^1 (S_1)} \, dt'
\\
=
&\sum_{|I_1| = N, a} \int_0^t \Big\| {|(t'\del_a + r \del_t) \Gamma^{I_1} u| \over \langle r-t' \rangle (1+t')} \langle r-t' \rangle |u| \big| \Gamma u \big|  \Big\|_{L^1 (S_1)} \, dt'
\\
\lesssim
& \sum_{|I_1| = N, a} \int_0^t \Big\| {(t'\del_a + r \del_t) \Gamma^{I_1} u \over \langle r-t' \rangle r} \Big\|_{L^2(\RR^2)} (C_1 \eps) \log(1+t') \big\| \langle  r-t' \rangle u \big\|_{L^\infty (S_1)} \, dt'
\\
\lesssim 
& \sum_{|I_1| = N, a} \int_0^t \Big\| {(t'\del_a + r \del_t) \Gamma^{I_1} u \over \langle r-t' \rangle r} \Big\|_{L^2(\RR^2)} (C_1 \eps)  (1+t')^{-1/4 + \delta} \, dt',
\endaligned
$$
where we have applied the fact that $r \leq 2(1+t)$ within the region $S_1$ in the second step, and the simple relation $\log^2(1+t) \lesssim (1+t)^\delta$ for $t, \delta > 0$ in the last step, and then Cauchy-Schwarz inequality gives us 
$$
\sum_{|I_1| = N, a} \int_0^t \Big\| {|u| \over 1+t'} \big(|(t\del_a + r \del_t) \Gamma^{I_1} u|  \big) \big| \Gamma u \big|  \Big\|_{L^1 (S_1)} \, dt
\lesssim
(C_1 \eps)^3 (1+t)^{1/4 + \delta}.
$$

The proof is done.

\end{proof}

With Proposition \ref{prop:E} and Proposition \ref{prop:L2} prepared, we are now ready to give the proof of Theorem \ref{thm:main}.

\begin{proof}[Proof of Theorem \ref{thm:main}]
Recall the refined estimates in proposition \ref{prop:E} and \ref{prop:L2}, and if we choose $C_1$ very large (say three times larger than the implicit constant in $\lesssim$), and $\eps$ small enough such that $(C_1 \eps)^2 \leq \eps$, then we obtain the refined estimates in \eqref{eq:BA-refined}. Then we know $T>t_0$ cannot be finite, that is $T$ must be $+\infty$, and Theorem \ref{thm:main} is thus verified.

\end{proof}


\section*{Acknowledgments} This project has received funding from the European Research Council (ERC) under the European Union's Horizon 2020 research and innovation program Grant agreement No 637653, project BLOC "Mathematical Study of Boundary Layers in Oceanic Motion". The author was partially supported by the Innovative Training Networks (ITN) grant 642768 entitled ModCompShock. The author would like to thank Allen Fang (Sorbonne University) for letting the author know the reference \cite{Liu} and helpful discussions, and to thank Siyuan Ma (Sorbonne University) for helpful discussions. The author also owes great thanks to Dongbing Zha (Donghua University), who informed the author the references \cite{Godin, Katayama, Zha}.



\end{document}